\title{Graphs with $\alpha_1$ and $\tau$ Both Large}
\author{Gregory J.~Puleo}
\renewcommand{\epsilon}{\varepsilon}
\renewcommand{\subset}{\subseteq}
\DeclareMathOperator{\bin}{Bin}
\newcommand{\join}{\vee}
\newcommand{\erdos}{Erd\H os}
\newcommand{\bad}[1]{\textcolor{red}{\emph{#1}}}
\newcommand{\citethis}[1]{\bad{[CITE THIS]}}
\newcommand{\sizeof}[1]{\left\lvert{#1}\right\rvert}
\newcommand{\floor}[1]{\left\lfloor{#1}\right\rfloor}
\newcommand{\aph}{\alpha_1}
\let\oldtau\tau
\renewcommand{\tau}{\oldtau_1}
\newcommand{\pr}{\mathbb{P}}
\newcommand{\ex}{\mathbb{E}}
\newtheorem{proposition}{Proposition}[section]
\newtheorem{lemma}[proposition]{Lemma}
\newtheorem{theorem}[proposition]{Theorem}
\newtheorem{corollary}[proposition]{Corollary}
\newtheorem{problem}[proposition]{Problem}
\theoremstyle{definition}
\newtheorem{definition}[proposition]{Definition}
\theoremstyle{remark}
\begin{document}
\maketitle
\begin{abstract}
  Given a graph $G$, let $\tau(G)$ denote the smallest size of a set
  of edges whose deletion makes $G$ triangle-free, and let $\aph(G)$
  denote the largest size of an edge set containing at most one edge
  from each triangle of $G$. \erdos, Gallai, and Tuza introduced
  several problems with the unifying theme that $\aph(G)$ and $\tau(G)$
  cannot both be ``very large''; the most well-known such problem is
  their conjecture that $\aph(G) + \tau(G) \leq \sizeof{V(G)}^2/4$,
  which was proved by Norin and Sun.  We consider three other problems
  within this theme (two introduced by \erdos, Gallai, and Tuza, 
  another by Norin and Sun), all of which request an upper bound either
  on $\min\{\aph(G), \tau(G)\}$ or on $\aph(G) + k\tau(G)$ for some
  constant $k$, and prove the existence of graphs for which these
  quantities are ``large''.
\end{abstract}
\section{Introduction}
A \emph{triangle independent set} in a graph $G$ is a set of edges
containing at most one edge from each triangle of $G$, while a
\emph{triangle edge cover} in a graph $G$ is a set of edges containing
at least one edge from each triangle of $G$. Equivalently, a triangle
edge cover is a set of edges whose deletion from $G$ results in a
triangle-free graph.  We write $\aph(G)$ to denote the size of a
largest triangle independent set in $G$ and we write $\tau(G)$ for the
size of a smallest triangle edge cover in $G$. \erdos, Gallai, and
Tuza~\cite{EGT} considered several problems relating the quantities
$\aph(G)$ and $\tau(G)$, with the unifying theme that $\aph(G)$ and
$\tau(G)$ should not both be ``large'': informally, if it is easy for
an edge set to avoid all triangles, in the sense of $\aph(G)$ being
large, then it should also be easy to destroy all triangles, so that
$\tau(G)$ should be small.  In particular, they posed the following
statement as a conjecture, which was was proved (in a somewhat
stronger form) by Norin and Sun~\cite{norin-sun}.
\begin{theorem}[Norin--Sun~\cite{norin-sun}]\label{thm:norin-sun}
  If $G$ is an $n$-vertex graph, then $\aph(G) + \tau(G) \leq n^2/4$.
\end{theorem}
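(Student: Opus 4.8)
The plan is to dispose of sparse graphs immediately and to attack dense graphs either by induction or by symmetrization, using throughout a structural property of maximum triangle independent sets. Write $\beta(G)$ for the largest number of edges in a triangle-free subgraph of $G$, so that $\tau(G) = \sizeof{E(G)} - \beta(G)$. Since a triangle independent set is itself triangle-free, $\aph(G) \le \beta(G)$, so if $\sizeof{E(G)} \le n^2/4$ then $\aph(G) + \tau(G) \le \beta(G) + (\sizeof{E(G)} - \beta(G)) = \sizeof{E(G)} \le n^2/4$ and there is nothing to prove. Hence I may assume $\sizeof{E(G)} > n^2/4$; equivalently, I must show $\aph(G) \le \beta(G) - (\sizeof{E(G)} - n^2/4)$, i.e.\ that in a dense graph the largest triangle independent set falls short of the largest triangle-free subgraph by a definite margin. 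It is in this step that a Mantel-type bound must ultimately enter, since the target $n^2/4$ is exactly the maximum number of edges of a triangle-free graph on $n$ vertices.

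The workhorse observation is that if $M$ is a triangle independent set, then for every vertex $v$ the set $N_M(v) = \{u \st uv \in M\}$ of $M$-neighbours of $v$ is independent in $G$: two adjacent vertices of $N_M(v)$ would lie with $v$ in a triangle containing two edges of $M$. Consequently $\deg_M(v) \le \alpha(G[N_G(v)])$ for every $v$, and summing gives $2\aph(G) = \sum_v \deg_M(v) \le \sum_v \alpha(G[N_G(v)])$, a bound controlled entirely by the link graphs $G[N_G(v)]$. To pair this with an upper bound on $\tau$ I would try two routes. The first is induction on $n$: removing a vertex $v$ of degree $d$ yields $\aph(G) \le \aph(G-v) + d$ (restricting a maximum triangle independent set of $G$ to $G-v$ removes at most $d$ of its edges) and $\tau(G) \le \tau(G-v) + d$ (adjoin the $d$ edges at $v$ to a minimum cover of $G-v$), so by the inductive hypothesis $\aph(G) + \tau(G) \le (n-1)^2/4 + 2d \le n^2/4$ provided $d \le (2n-1)/8$; this already settles every $G$ with a vertex of degree at most $(2n-1)/8$. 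The second route, for graphs of large minimum degree, is Zykov-style symmetrization --- iteratively replacing the neighbourhood of one endpoint of a non-edge by that of the other --- with the goal of showing that $\aph + \tau$ does not decrease under this operation, thereby reducing to complete multipartite graphs, whose link graphs are again complete multipartite and for which $\aph$, $\tau$, and $\beta$ can be evaluated directly.

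The part I expect to be genuinely difficult is this dense, large-minimum-degree regime, which contains both extremal configurations: $K_n$, where $\aph(G) = \floor{n/2}$, $\tau(G)$ supplies the balance, and the local inequalities above are tight; and $K_{\ceil{n/2},\floor{n/2}}$, where $\tau(G) = 0$ and $M$ can be taken to be all of $E(G)$. Here the exchange between $\aph(G)$ and $\tau(G)$ must be accounted for without slack, and the obstacle is that the triangles that block a larger triangle independent set, the triangles a minimum cover must destroy, and the per-vertex deficiency $\deg_G(v) - \alpha(G[N_G(v)])$ are all governed by the same link graphs, so the three estimates are intertwined. Establishing monotonicity of $\aph + \tau$ under symmetrization --- or, failing that, distilling from the link-graph inequalities a global count that delivers the bound $n^2/4$ rather than the trivial $\sizeof{E(G)}$ --- is where I anticipate the real work to lie.
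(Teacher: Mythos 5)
There is a genuine gap, and you have in fact located it yourself: the dense, large-minimum-degree regime is not handled, and that regime is the entire content of the theorem. Your preliminary reductions are all correct --- $\tau(G) = \sizeof{E(G)} - \beta(G)$ disposes of graphs with $\sizeof{E(G)} \leq n^2/4$; the observation that the $M$-neighbourhood of each vertex is independent (so $\deg_M(v) \leq \alpha(G[N_G(v)])$) is a standard and valid local bound; and the vertex-deletion induction correctly settles any graph with a vertex of degree at most $(2n-1)/8$. But after these reductions you are left with all graphs having more than $n^2/4$ edges and minimum degree exceeding roughly $n/4$, and for these you offer only two candidate strategies (Zykov symmetrization, or ``distilling a global count'' from the link-graph inequalities) without carrying either one out. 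In particular, the claimed monotonicity of $\aph + \tau$ under symmetrization is asserted as a goal, not proved, and it is far from obvious: replacing the neighbourhood of one endpoint of a non-edge by that of the other can change the triangle structure drastically, and neither $\aph$ nor $\tau$ is individually well-behaved under this operation, so there is no reason to expect their sum to be monotone without a substantial argument.

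It is worth emphasizing why this gap is fatal rather than routine: the statement you are proving is the Erd\H os--Gallai--Tuza conjecture, which resisted proof for roughly three decades precisely because the easy reductions (sparse case, low-degree vertex deletion, local independence of $M$-neighbourhoods) were all known and all fail to touch the dense case where the two extremal configurations $K_n$ and $K_{\ceil{n/2},\floor{n/2}}$ force every inequality to be tight simultaneously. Norin and Sun's actual proof does not proceed by symmetrization; it requires a genuinely new idea (they prove a stronger, more local statement by a delicate argument). A proposal whose concluding sentence is that the remaining case ``is where I anticipate the real work to lie'' has, by its own account, not yet begun the proof of the theorem.
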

The complete graph $K_n$ and the complete bipartite graph
$K_{n/2,n/2}$ both satisfy $\aph(G) + \tau(G) = n^2/4$, but a
different part of the sum dominates for each graph.  While this
conjecture has now been proved, many interesting problems relating
$\aph(G)$ and $\tau(G)$ remain open.

Norin and Sun~\cite{norin-sun} posed the following problem:
\begin{problem}[Question~8 of \cite{norin-sun}]\label{prob:norin}
  Determine the largest constant $c$ such that $\aph(G) + c\tau(G)
  \leq \sizeof{E(G)}$ for every graph $G$.
\end{problem}
Erd\"os, Gallai, and Tuza~\cite{EGT} proved that
$\aph(G) + \tau(G) \leq \sizeof{E(G)}$ for every graph $G$, which
gives a lower bound of $c \geq 1$ in Problem~\ref{prob:norin}.  As
Norin and Sun~\cite{norin-sun} observed, a conjecture of Tuza
mentioned in \cite{OldNew} is equivalent to the claim that we can take
$c \geq 5/3$ in Problem~\ref{prob:norin}.  In Section~\ref{sec:norin},
we prove that $c=1$ is the correct answer to Problem~\ref{prob:norin},
which refutes the conjecture of Tuza.

Erd\H os, Gallai, and Tuza also posed the following
closely related problems. A \emph{triangular} graph is a graph
such that every edge lies in some triangle.
\begin{problem}[Problem~13 of \cite{EGT}]\label{prob:minaphtau}
  Determine the largest constant $c$ for which there exists a
  triangular graph $G$ such that $\min(\aph(G), \tau(G)) \geq
  c\sizeof{E(G)}$.
\end{problem}
\begin{problem}[\cite{EGT}]\label{prob:sumaphtau}
  Determine the largest constant $c'$ for which there exists a
  triangular graph $G$ such that $\aph(G) + 2\tau(G) \geq
  c'\sizeof{E(G)}$.
\end{problem}
Since $\aph(G) + \tau(G) \leq \sizeof{E(G)}$ and
$\tau(G) \leq \frac{1}{2}\sizeof{E(G)}$ for all $G$, we have upper
bounds of $c \leq 1/2$ in Problem~\ref{prob:minaphtau} and
$c' \leq 3/2$ in Problem~\ref{prob:sumaphtau}. If we ignore the
``triangular'' restriction, then by taking a disjoint union of
appropriately sized $K_{s}$ and $K_{t,t}$ one can easily get
$c \geq 1/3 - \epsilon$ in Problem~\ref{prob:minaphtau}; likewise,
taking any triangle-free graph yields $c' \geq 1$ in
Problem~\ref{prob:sumaphtau}. 

In Section~\ref{sec:minaphtau} we give a probabilistic construction yielding
triangular graphs with $c \geq \frac{3 - \sqrt{5}}{2} - \epsilon$ in
Problem~\ref{prob:minaphtau} (where
$\frac{3 - \sqrt{5}}{2} \approx 0.38$) and $c' \geq 3 - \sqrt{3} - \epsilon$ in
Problem~\ref{prob:sumaphtau} (where $3 - \sqrt{3} \approx 1.26$).

\section{Bounding $\aph(G) + c\tau(G)$}\label{sec:norin}
In this section, we settle Problem~\ref{prob:norin} by proving the following
theorem.
\begin{theorem}\label{thm:norin-ctx}
  If $c > 1$, then there is a graph $G$ for which $\aph(G) + c\tau(G) > \sizeof{E(G)}$.
\end{theorem}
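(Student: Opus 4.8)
The plan is to recast the inequality using \emph{double covers}. Call an edge set $D \subseteq E(G)$ a double cover of $G$ if every triangle of $G$ contains at least two edges of $D$, and write $\oldtau_2(G)$ for the minimum size of a double cover. An edge set $S$ is a triangle independent set exactly when $E(G) \setminus S$ is a double cover, so $\aph(G) = \sizeof{E(G)} - \oldtau_2(G)$, and the target inequality $\aph(G) + c\tau(G) > \sizeof{E(G)}$ is equivalent to $\oldtau_2(G) < c\tau(G)$. In this language the \erdos--Gallai--Tuza bound $\aph(G) + \tau(G) \le \sizeof{E(G)}$ is just the tautology $\tau(G) \le \oldtau_2(G)$, and (for a graph with a triangle) this is strict: in a minimum triangle edge cover every edge is critical, so some triangle is covered only once and the cover is not a double cover. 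Hence it suffices to construct, for each $c > 1$, a graph with $\oldtau_2(G)/\tau(G) < c$ — equivalently, a sequence $G_m$ with $\tau(G_m) \to \infty$ and $\oldtau_2(G_m) - \tau(G_m) = o(\tau(G_m))$.

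To locate the difficulty, fix a minimum triangle edge cover $F$. Then $G - F$ is triangle-free, and each triangle $T$ with $\sizeof{E(T) \cap F} = 1$ contributes a path $P_T$ of two edges surviving in $G - F$; enlarging $F$ to a double cover amounts precisely to adjoining a hitting set for the family $\{P_T\}$, so $\oldtau_2(G) - \tau(G)$ equals, up to an additive constant, the minimum number of edges meeting all the $2$-edge paths $P_T$. We therefore want graphs in which $\tau$ is large yet, for some minimum cover $F$, these paths concentrate on only $o(\tau)$ edges. The two demands fight each other: ``$\tau$ large'' forces the triangles to be spread out, while concentrating the $P_T$ is a form of bunching, and the crude way to bunch — routing many triangles through a common edge — turns that edge into a one-element partial cover and collapses ``$\tau$ large''. (For instance, if every edge of $G$ lies in at most two triangles the triangle--triangle intersection graph is subcubic, and one checks $\oldtau_2/\tau$ then stays bounded away from $1$; so the construction must place many edges in three or more triangles.)

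The way to reconcile the two is to pass to the fractional relaxation. Let $\nu^*(G) = \oldtau^*(G)$ be the fractional triangle packing/cover number. A double cover uses at least two edges from every triangle of a maximum fractional packing, so $\oldtau_2(G) \ge 2\nu^*(G)$ (the double-cover linear program has value $2\nu^*$), whereas $\tau(G) \le 2\nu^*(G)$ by the known fractional form of Tuza's conjecture. Thus on any graph for which $\tau$ is close to $2\nu^*$ — a near-extremal example for that bound — we automatically get $\oldtau_2 \ge 2\nu^* \approx \tau$; and if, in addition, the graph admits a double cover of size only $(1 + o(1))\cdot 2\nu^*$ (the double-cover program being asymptotically integral on it), then $\oldtau_2 = (1 + o(1))\tau$, as required. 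So the concrete goal is a family $G_m$ with $\tau(G_m)/\nu^*(G_m) \to 2$ for which one can also exhibit a double cover of size $(1 + o(1))\cdot 2\nu^*(G_m)$; I would search for such $G_m$ among randomized constructions that force triangles to overlap in a balanced, pseudorandom pattern (local gadgets, by the subcubic remark above, cannot suffice).

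\textbf{The main obstacle} is building $G_m$ and proving $\tau(G_m)$ really is large. Lower bounds on $\tau$ are the hard part: one certifies $\tau(G_m) \ge (2 - o(1))\nu^*(G_m)$ by producing an almost-optimal fractional triangle packing — equivalently, by showing no small edge set meets every triangle — while certifying $\oldtau_2(G_m) \le (1 + o(1))\cdot 2\nu^*(G_m)$ only requires writing down an explicit near-optimal double cover. Squeezing these two estimates together pinches $\oldtau_2(G_m)/\tau(G_m)$ down to $1 + o(1)$, and then for any given $c > 1$ a sufficiently large $G_m$ satisfies $\aph(G_m) + c\tau(G_m) > \sizeof{E(G_m)}$.
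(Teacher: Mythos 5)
Your reduction is correct: a set $S$ is triangle independent exactly when $E(G)\setminus S$ meets every triangle at least twice, so $\aph(G)=\sizeof{E(G)}-\oldtau_2(G)$ and the theorem is equivalent to exhibiting, for each $c>1$, a graph with $\oldtau_2(G)/\tau(G)<c$ --- i.e.\ a family with $\oldtau_2/\tau\to 1$. But you have not produced such a family: the proposal ends by naming the construction and the lower bound on $\tau$ as ``the main obstacle,'' so the proof is incomplete. The route you sketch is also harder than necessary: you would need graphs with $\tau\geq(2-o(1))\nu^*$ together with an explicit double cover of size $(2+o(1))\nu^*$, and nothing in the proposal establishes that both estimates can be achieved simultaneously (even exhibiting families with $\tau/\nu^*\to 2$ is a nontrivial matter; the standard near-tight examples for Tuza-type bounds concern the integral packing number).

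The missing construction is in fact simple, and it is a ``bunched'' example of exactly the kind your second paragraph talks yourself out of. Take $G=K_1\join H$, where $H$ is an $n$-vertex triangle-free graph with $\alpha(H)=o(n)$ (such graphs exist by Kim's lower bound on $R(3,t)$). Every triangle of $G$ uses the apex vertex, so the star at the apex is a double cover and $\oldtau_2(G)\leq n$; on the other hand $\tau(G)=n-\alpha(H)=n-o(n)$, which the paper computes exactly via Lemma~\ref{lem:tritau} with $k=1$ (intuitively, after deleting a triangle edge cover the apex's surviving neighborhood must be independent in $H$, hence has size at most $\alpha(H)$). Thus $\oldtau_2(G)/\tau(G)\leq n/(n-o(n))\to 1$, as required. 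Note that here the triangles concentrate on a common \emph{vertex} rather than a common edge, so $\tau$ stays large even though the triangles are maximally bunched; your heuristic that heavy overlap collapses $\tau$ applies only to overlap on edges, and this is the loophole the actual construction exploits.
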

Our proof uses the following lemma, which is also used in Section~\ref{sec:minaphtau}.
\begin{definition}
  Let $G$ be a graph and let $k$ be a positive real number.  For each
  $S \subset V(G)$, define
  $\phi_k(S) = k\sizeof{S} - \sizeof{E(G[S])}$.
\end{definition}
\begin{definition}
  The \emph{join} of two graphs $G$ and $H$, written $G \join H$,
  is the graph obtained from their disjoint union by adding all possible
  edges between the vertices of $G$ and the vertices of $H$.
\end{definition}
\begin{lemma}[\cite{greg-vizing}]\label{lem:tritau}
  If $G$ is a triangle-free graph on $n$ vertices and $k$ is a positive integer, then $\tau(\overline{K_k} \join G) =  nk - \max_{S \subset V(G)}\phi_k(S)$.  
\end{lemma}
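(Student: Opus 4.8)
The plan is to describe the triangles of $H := \overline{K_k} \join G$ explicitly and then translate the edge-deletion problem into one about ordinary vertex covers of edge-deleted subgraphs of $G$. Write $A$ for the independent $k$-set coming from $\overline{K_k}$ and identify $V(G)$ with the other part $B$, so $n = \sizeof{B}$. The edge set of $H$ is the disjoint union of $E(G)$ and the $nk$ edges of the complete bipartite graph between $A$ and $B$. Because $A$ is independent and $G$ is triangle-free, no triangle of $H$ can have two or three vertices in $A$, and none can have all three in $B$; hence every triangle of $H$ has the form $\{a, u, v\}$ with $a \in A$ and $uv \in E(G)$.

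Next I would record the following dictionary. Given any $F \subseteq E(H)$, set $D := F \cap E(G)$ and, for each $a \in A$, set $T_a := \{u \in B \st au \in F\}$, so that $\sizeof{F} = \sizeof{D} + \sum_{a \in A}\sizeof{T_a}$. A triangle $\{a,u,v\}$ with $uv \in E(G)\setminus D$ is covered by $F$ if and only if $u \in T_a$ or $v \in T_a$; thus $F$ is a triangle edge cover of $H$ if and only if each $T_a$ is a vertex cover of the graph $G-D$ obtained from $G$ by deleting the edges of $D$. Writing $\alpha(\cdot)$ for the ordinary (vertex) independence number, the minimum size of a vertex cover of $G-D$ is $n - \alpha(G-D)$, and since the $T_a$ may be chosen independently for distinct $a$, minimizing over $F$ for a fixed $D$ costs $\sizeof{D} + k(n - \alpha(G-D))$. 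Hence
\[
  \tau(H) = \min_{D \subseteq E(G)} \Bigl(\sizeof{D} + k\bigl(n - \alpha(G-D)\bigr)\Bigr) = nk - \max_{D \subseteq E(G)}\bigl(k\,\alpha(G-D) - \sizeof{D}\bigr).
\]

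It then remains to show $\max_{D \subseteq E(G)}\bigl(k\,\alpha(G-D) - \sizeof{D}\bigr) = \max_{S \subseteq V(G)} \phi_k(S)$. For ``$\geq$'', given $S \subseteq V(G)$, take $D = E(G[S])$; then $S$ is independent in $G-D$, so $k\,\alpha(G-D) - \sizeof{D} \geq k\sizeof{S} - \sizeof{E(G[S])} = \phi_k(S)$. For ``$\leq$'', given $D$, let $S$ be a maximum independent set of $G-D$; then every edge of $G$ with both ends in $S$ belongs to $D$, so $\sizeof{D} \geq \sizeof{E(G[S])}$ and therefore $k\,\alpha(G-D) - \sizeof{D} \leq k\sizeof{S} - \sizeof{E(G[S])} = \phi_k(S)$. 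Combining this with the previous display yields the lemma.

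I expect the only real subtlety to lie in setting up the dictionary correctly: one must check that it never helps to delete edges of $G$ lying outside the chosen independent set, nor to use different sets $T_a$ at different apex vertices, and conversely that the relevant $D$ and $T_a$'s can always be recovered from a maximum independent set of $G-D$ as above. Once the problem has been recast in terms of $\alpha(G-D)$, the rest is the short exchange argument; the triangle-free hypothesis on $G$ is used only to rule out triangles lying entirely within $B$.
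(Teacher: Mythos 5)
Your argument is correct and complete: the classification of the triangles of $\overline{K_k}\join G$, the reduction of each apex vertex's deleted star to a vertex cover of $G-D$ via the Gallai identity $n-\alpha(G-D)$, and the final exchange showing $\max_{D}\bigl(k\,\alpha(G-D)-\sizeof{D}\bigr)=\max_{S}\phi_k(S)$ are all sound. The paper itself gives no proof of this lemma (it is quoted from an external reference), but your derivation is the natural one and matches how such join formulas are standardly established, so there is nothing to flag.
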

The function $\phi_k$ was studied by Favaron~\cite{favaron} in connection with a problem
of Fink and Jacobson~\cite{fink-jacobson1,fink-jacobson2} concerning $k$-dependence and $k$-domination.
The notation $\phi_k$ is borrowed from the survey paper \cite{DomSurvey}.
Observe that when $k=1$, the quantity $\max_{S \subset V(G)}\phi_1(S)$ is just the independence number of $G$. Note that while previous definitions of the function $\phi_k$
mostly considered integral values of $k$, here we extend it to allow $k$
to be any positive real number.
\begin{proof}[Proof of Theorem~\ref{thm:norin-ctx}]
  Our construction is essentially the same construction used by
  \erdos, Gallai, and Tuza for the lower bound in Theorem~5 of
  \cite{EGT}. Let $n$ be a positive integer to be determined, and let
  $H$ be an $n$-vertex triangle-free graph whose independence number
  $\alpha(H)$ is minimum. By a result of Kim~\cite{kim-ramsey}, we
  have $\alpha(H) \leq 9\sqrt{n\ln n}$. (However, weaker and easier
  bounds on $\alpha(H)$ would also suffice for this proof; we only
  need $\alpha(H) = o(n)$).

  Let $G = K_1 \join H$. As $H$ is a triangle-independent subgraph of
  $G$, we have $\aph(G) \geq \sizeof{E(H)} = \sizeof{E(G)} - n$.  The
  $k=1$ case of Lemma~\ref{lem:tritau} implies that
  $\tau(G) = n - \alpha(H) = n - o(n)$, so we have
  \begin{align*}
    \aph(G) + c\tau(G) &= (\sizeof{E(G)} - n) + (n - o(n)) + (c-1)(n - o(n)) \\
    &= \sizeof{E(G)} + (c-1)n - o(n).    
  \end{align*}
  Since $c > 1$, for sufficiently large $n$ we have $\aph(G) + c\tau(G) > \sizeof{E(G)}$, as desired.
\end{proof}

\section{A Lower Bound on $\min\{\aph(G), \tau(G)\}$}\label{sec:minaphtau}
\begin{lemma}\label{lem:manyedges}
  Let $\epsilon, \theta \in (0,1)$ be fixed constants, let
  $p(n) = n^{-\theta}$, and let $G \sim G(n,p)$.  With high
  probability,
  \[\sizeof{E(G[S])} \geq (1-\epsilon)p\frac{\sizeof{S}^2}{2}\] for all
  $S \subset V(G)$ such that $\sizeof{S} \geq \epsilon n$.
\end{lemma}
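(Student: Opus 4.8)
The plan is to use a union bound over all "large" sets $S$, combined with a Chernoff-type concentration inequality for the binomial random variable $\sizeof{E(G[S])}$. For a fixed set $S$ with $\sizeof{S} = s$, the quantity $\sizeof{E(G[S])}$ is distributed as $\bin\left(\binom{s}{2}, p\right)$, so its expectation is $p\binom{s}{2} = p\frac{s(s-1)}{2}$. The target bound $(1-\epsilon)p\frac{s^2}{2}$ is slightly below this expectation for large $s$ — the gap between $p\frac{s(s-1)}{2}$ and $(1-\epsilon)p\frac{s^2}{2}$ is of order $\epsilon p s^2$, which is comfortably larger than the standard deviation $\sqrt{ps^2}$ when $s \geq \epsilon n$ and $p = n^{-\theta}$ with $\theta < 1$. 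So for each fixed $S$, the probability that $\sizeof{E(G[S])} < (1-\epsilon)p\frac{s^2}{2}$ is exponentially small in $p s^2$.

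First I would fix $S$ with $\sizeof{S} = s \geq \epsilon n$ and apply the lower-tail Chernoff bound: $\pr\left[\bin(m, p) \leq (1-\delta)mp\right] \leq \exp\left(-\delta^2 mp/2\right)$ with $m = \binom{s}{2}$. Choosing $\delta$ so that $(1-\delta)p\binom{s}{2} = (1-\epsilon)p\frac{s^2}{2}$ — which for large $n$ means $\delta$ is roughly $\epsilon$ (minus a lower-order $O(1/s)$ correction, which is negligible since $s \geq \epsilon n \to \infty$) — gives a failure probability of at most $\exp\left(-c_1 \epsilon^2 p s^2\right)$ for a suitable constant $c_1 > 0$, and since $s \geq \epsilon n$ this is at most $\exp\left(-c_2 \epsilon^4 p n^2\right) = \exp\left(-c_2 \epsilon^4 n^{2-\theta}\right)$.

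Next I would take a union bound over all choices of $S$. The number of subsets of $V(G)$ is $2^n$, so the probability that the desired inequality fails for \emph{some} $S$ with $\sizeof{S} \geq \epsilon n$ is at most $2^n \exp\left(-c_2 \epsilon^4 n^{2-\theta}\right) = \exp\left(n\ln 2 - c_2 \epsilon^4 n^{2-\theta}\right)$. Since $\theta < 1$, the exponent $n^{2-\theta}$ dominates the linear term $n\ln 2$, so this probability tends to $0$ as $n \to \infty$. This establishes the claim with high probability.

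The main technical point to be careful about — rather than a genuine obstacle — is the discrepancy between $\binom{s}{2} = \frac{s(s-1)}{2}$ and $\frac{s^2}{2}$: one must verify that the $(1-\epsilon)$ slack in the statement absorbs both this $O(s)$ discrepancy and the Chernoff fluctuation simultaneously, uniformly over all $s$ in the range $[\epsilon n, n]$. This is routine: for $s \geq \epsilon n$ we have $\frac{s-1}{s} \geq 1 - \frac{1}{\epsilon n} \to 1$, so $p\binom{s}{2} \geq (1 - o(1))p\frac{s^2}{2}$, and choosing the Chernoff parameter $\delta = \epsilon/2$ (say) leaves room to spare for large $n$. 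The only place one needs $\theta < 1$ rather than merely $\theta \leq 1$ is to beat the $2^n$ from the union bound; no other step is delicate.
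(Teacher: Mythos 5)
Your proposal is correct and follows essentially the same route as the paper's proof: apply a lower-tail Chernoff bound to $\sizeof{E(G[S])} \sim \bin\bigl(\binom{s}{2}, p\bigr)$ with a slack parameter of roughly $\epsilon/2$ so that the $(1-\epsilon)$ factor absorbs both the fluctuation and the $\binom{s}{2}$ versus $s^2/2$ discrepancy, then union bound over the at most $2^n$ subsets using $n^{2-\theta} \gg n$. No meaningful differences from the paper's argument.
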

\begin{proof}
  Fix $S \subset V(G)$ with $\sizeof{S} \geq \epsilon n$ and let the
  random variable $X$ denote the number of edges in $S$. We have
  $X \sim \bin({\sizeof{S} \choose 2}, p)$. We may assume that $n$ is
  large enough that ${\epsilon n \choose 2} \geq \epsilon^2n^2/3$.  By
  Chernoff's inequality (as formulated in Corollary~2.3 of \cite{RandomGraphs}), 
  \begin{align*}
    \pr[X < (1-\epsilon/2)\ex[X]] &\leq 2\exp\left(-\frac{(\epsilon/2)^2}{3}\ex[X]\right)\\
    &\leq 2\exp\left(-\frac{\epsilon^2}{12}p{\epsilon n \choose 2}\right) \\
    & \leq 2\exp\left(-\frac{\epsilon^4}{36}n^{2-\theta}\right)\\
    &\ll 2^{-n}.    
  \end{align*}
  For sufficiently large $n$, we have $(1-\epsilon/2)\ex[X] \geq (1-\epsilon)p\frac{\sizeof{S}^2}{2}$.
  The desired claim therefore follows by applying the union bound.  
\end{proof}
\begin{lemma}\label{lem:phiwhp}
  Let $d, \epsilon > 0$ be fixed constants with $\epsilon < 1$, let $p(n) = n^{-\theta}$, where $0 < \theta < 1$, and let $G \sim G(n,p)$. Let $k = dnp$. If $d \geq 2\epsilon(1-\epsilon)$, then with high probability, $\max_{S \subset V(G)}\phi_k(S) \leq \frac{k^2}{2(1-\epsilon)p}$.
\end{lemma}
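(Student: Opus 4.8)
The plan is to bound $\phi_k(S)$ separately for \emph{large} sets $S$ with $\sizeof{S} \geq \epsilon n$ and \emph{small} sets $S$ with $\sizeof{S} < \epsilon n$, working throughout on the single high-probability event supplied by Lemma~\ref{lem:manyedges}; no further randomness will enter the argument.

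For the large sets, I would invoke Lemma~\ref{lem:manyedges} to obtain $\sizeof{E(G[S])} \geq (1-\epsilon)p\sizeof{S}^2/2$, so that $\phi_k(S) \leq k\sizeof{S} - (1-\epsilon)p\sizeof{S}^2/2$. The right-hand side is a downward parabola in the real variable $\sizeof{S}$, so one simply maximizes it by calculus: its maximum, attained at $\sizeof{S} = k/((1-\epsilon)p)$, equals $k^2/(2(1-\epsilon)p)$, which is exactly the claimed bound. Because Lemma~\ref{lem:manyedges} is uniform over all such $S$, this disposes of every large set at once.

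For the small sets, I would discard the nonnegative edge term and use the crude estimate $\phi_k(S) \leq k\sizeof{S} < k\epsilon n$. It then remains only to check the inequality $k\epsilon n \leq k^2/(2(1-\epsilon)p)$; substituting $k = dnp$ reduces this to $2\epsilon(1-\epsilon) \leq d$, which is precisely the hypothesis. Combining the two cases shows that, on the event of Lemma~\ref{lem:manyedges}, every $S \subset V(G)$ satisfies $\phi_k(S) \leq k^2/(2(1-\epsilon)p)$, and since that event holds with high probability, the lemma follows.

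I do not anticipate a genuine obstacle: the proof is one event-conditioning plus a quadratic optimization. The only point requiring care is that Lemma~\ref{lem:manyedges} gives no control on small sets — which is exactly why the hypothesis $d \geq 2\epsilon(1-\epsilon)$ is needed, as it is precisely the condition guaranteeing that the trivial bound $k\epsilon n$ for small sets already falls below the parabola's maximum value.
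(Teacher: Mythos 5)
Your proposal is correct and follows essentially the same argument as the paper: the identical split at $\sizeof{S} = \epsilon n$, the trivial bound $k\sizeof{S} < k\epsilon n$ for small sets (where the hypothesis $d \geq 2\epsilon(1-\epsilon)$ enters in exactly the same way), and maximizing the quadratic $k x - (1-\epsilon)p x^2/2$ for large sets via Lemma~\ref{lem:manyedges}. No gaps.
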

\begin{proof} 
  If $\sizeof{S} < \epsilon n$, then we have
  \[ \phi_k(S) \leq k\sizeof{S} < k\epsilon n = \frac{k^2\epsilon}{dp} \leq \frac{k^2}{2(1-\epsilon)p}, \]
  as desired.
  Thus, it suffices to consider $S$ with $\sizeof{S} \geq \epsilon n$.
  By Lemma~\ref{lem:manyedges}, with high probability we have $\sizeof{E(G[S])}
  \geq (1-\epsilon)p\frac{\sizeof{S}^2}{2}$ for all $S \subset V(G)$
  with $\sizeof{S} \geq \epsilon n$. Thus, for all such $S$ and for $n$ sufficiently large, we have with high probability
  \[ \phi_k(S) \leq k\sizeof{S} - (1-\epsilon)p\frac{\sizeof{S}^2}{2}. \]
  Letting $f(x) = kx - (1-\epsilon)p\frac{x^2}{2}$, we see that $f$ is maximized at $x = \frac{k}{(1-\epsilon)p}$,
  attaining a maximum value of $k^2/(2(1-\epsilon)p)$. The conclusion follows.
\end{proof}
\begin{lemma}\label{lem:trifreephi}
  Let $\epsilon \in (0,1)$ be a fixed constant and let $d$ be a fixed constant
  with $d \geq 2\epsilon(1-\epsilon)$. Let $p(n) = n^{-3/4}$, and let $k = dnp$. For sufficiently large $n$,
  there exists a triangle-free graph $G$ with no isolated vertices such that:
  \begin{itemize}
  \item $\sizeof{E(G)} \leq (1+\epsilon)p\frac{n^2}{2}$,
  \item $\sizeof{E(G)} \geq (1-\epsilon)p\frac{n^2}{2}$, and
  \item $\max_{S \subset V(G)}\phi_k(S) \leq \frac{k^2}{2(1-\epsilon)p}$.
  \end{itemize}
\end{lemma}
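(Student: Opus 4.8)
The plan is to start from a random graph $G' \sim G(n,p)$ with $p = n^{-3/4}$, delete one edge from each triangle of $G'$ to destroy all triangles, delete any resulting isolated vertices, and let $G$ be the result. First I would record the easy properties of $G'$ that hold with high probability. Since $\sizeof{E(G')} \sim \bin(\binom{n}{2}, p)$ has mean $\Theta(n^{5/4})$, Chernoff's inequality gives that with high probability $(1-\epsilon/2)p\frac{n^2}{2} \leq \sizeof{E(G')} \leq (1+\epsilon/2)p\frac{n^2}{2}$. Since the expected number of triangles in $G'$ is $\binom{n}{3}p^3 = \Theta(n^{3/4})$, Markov's inequality gives that with probability $1 - o(1)$ the graph $G'$ has fewer than $n$ triangles. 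Finally, fixing a constant $\epsilon'' \in (0, \epsilon)$, Lemma~\ref{lem:manyedges} (applied with $\theta = 3/4$ and with $\epsilon''$ in place of $\epsilon$) gives that with high probability $\sizeof{E(G'[S])} \geq (1-\epsilon'')p\frac{\sizeof{S}^2}{2}$ for every $S \subset V(G')$ with $\sizeof{S} \geq \epsilon'' n$; in particular this holds for every $S$ with $\sizeof{S} \geq \epsilon n$. Fix an outcome $G'$ for which all of these events occur, which is possible once $n$ is large.

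Let $G_0$ be obtained from $G'$ by deleting one edge from each triangle; as $G'$ has fewer than $n$ triangles, $G_0$ is triangle-free and is obtained from $G'$ by deleting a set of $D < n$ edges. Then $\sizeof{E(G_0)} = \sizeof{E(G')} - D$, and since $D = O(n) = o(pn^2)$ this lies in $[(1-\epsilon)p\frac{n^2}{2}, (1+\epsilon)p\frac{n^2}{2}]$ once $n$ is large, giving the first two bullet points. Let $G$ be $G_0$ with all its isolated vertices removed: this leaves $\sizeof{E(G_0)}$ unchanged and keeps the graph triangle-free, and since $V(G) \subset V(G')$, passing from $G_0$ to $G$ cannot increase $\max_S \phi_k(S)$. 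So it remains only to bound $\max_{S \subset V(G)}\phi_k(S)$.

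I would split on $\sizeof{S}$. For $\sizeof{S} < \epsilon n$, exactly as in the proof of Lemma~\ref{lem:phiwhp}, the bound $\phi_k(S) \leq k\sizeof{S} < k\epsilon n = \frac{k^2 \epsilon}{dp} \leq \frac{k^2}{2(1-\epsilon)p}$ holds using the hypothesis $d \geq 2\epsilon(1-\epsilon)$; crucially this estimate does not refer to the edges of $G$ at all, so deleting edges is harmless here. For $\sizeof{S} \geq \epsilon n$, the graph $G[S]$ is obtained from $G'[S]$ by deleting at most $D$ edges (and possibly some vertices), so $\sizeof{E(G[S])} \geq (1-\epsilon'')p\frac{\sizeof{S}^2}{2} - D$, whence
\[
  \phi_k(S) \leq k\sizeof{S} - (1-\epsilon'')p\frac{\sizeof{S}^2}{2} + D \leq \frac{k^2}{2(1-\epsilon'')p} + D,
\]
the last step maximizing $x \mapsto kx - (1-\epsilon'')p\frac{x^2}{2}$ over $x$ as in Lemma~\ref{lem:phiwhp}.

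The one point requiring care — and the heart of the argument — is that $\frac{k^2}{2(1-\epsilon'')p} + D$ is genuinely at most $\frac{k^2}{2(1-\epsilon)p}$, rather than merely comparable to it. Because $\epsilon'' < \epsilon$, the slack $\frac{k^2}{2(1-\epsilon)p} - \frac{k^2}{2(1-\epsilon'')p} = \frac{k^2}{2p}\cdot\frac{\epsilon - \epsilon''}{(1-\epsilon)(1-\epsilon'')}$ is a fixed positive multiple of $k^2/p = d^2 n^{5/4}$, whereas $D = O(n)$; hence for $n$ large the slack exceeds $D$ and the desired inequality holds. Taking $\epsilon'' = \epsilon/2$ makes everything completely explicit. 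The reason this trick is needed is that $\phi_k$ is not monotone under edge deletion in our favour — deleting edges \emph{increases} $\phi_k(S)$ — so one must insert the slightly smaller constant $\epsilon''$ from the outset to leave room to absorb the $D$ edges removed when destroying triangles; everything else is routine Chernoff and first-moment bookkeeping, and the main obstacle is simply recognizing and handling this non-monotonicity.
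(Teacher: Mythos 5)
Your proposal is correct and is essentially the paper's own proof: both start from $G(n,n^{-3/4})$, use Chernoff for the edge count, use Markov to get at most $n$ triangles, delete one edge per triangle, and absorb the $O(n)$ deleted edges into the slack obtained by running the $\phi_k$ estimate with a strictly smaller $\epsilon$-parameter (the paper uses $\epsilon/2$ via Lemma~\ref{lem:phiwhp}, you inline the same computation with $\epsilon''$), the key point in each case being $k^2/p = d^2n^{5/4} \gg n$. The only real difference is the isolated-vertex step: the paper deletes a \emph{minimum} triangle edge cover $X$ and observes that minimality prevents $X$ from isolating any vertex, so $G$ keeps all $n$ vertices, whereas you discard isolated vertices afterward --- this still satisfies the lemma as literally stated, though the paper's variant is tidier since Theorem~\ref{thm:aphtaubound} later sets $n = \sizeof{V(G)}$.
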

\begin{proof}
  Consider a random graph $G_0$ drawn from $G(n,p)$. Note that $\sizeof{E(G_0)}$ is a
  binomial random variable with $\ex[\sizeof{E(G_0)}] = p{n \choose 2}$; in particular,
  $\ex[\sizeof{E(G_0)}] \to \infty$ as $n \to \infty$. Thus, by Chernoff's inequality,
  for any fixed $\gamma > 0$ we have $(1-\gamma)p{n \choose 2} \leq \sizeof{E(G_0)} \leq (1+\gamma)p{n \choose 2}$
  with high probability, and since $n \ll pn^2$, this implies that with high probability,
  \begin{equation}
    \label{eq:g0edges}
    (1-\epsilon)p\frac{n^2}{2} + n \leq \sizeof{E(G_0)} \leq (1+\epsilon)p\frac{n^2}{2},
  \end{equation}
  since we can choose, say, $\gamma = \epsilon/2$ so that $(1-\gamma)p\frac{n^2}{2} > (1-\epsilon)p\frac{n^2}{2} + n$
  for sufficiently large $n$. Furthermore, applying Lemma~\ref{lem:phiwhp} with the constants $d$ and $\epsilon/2$ implies that with high
  probability,
  \begin{equation}
    \label{eq:g0phi}
    \max_{S \subset V(G_0)}\phi_k(S) \leq \frac{k^2}{2(1-\epsilon/2)p}.
  \end{equation}
  Furthermore, as the expected number of triangles in $G_0$ is at most
  $n^{3/4}$, Markov's inequality implies that with high probability,
  $G_0$ has at most $n$ triangles. Similarly, with high probability
  $G_0$ has no isolated vertices. 

  Thus, for sufficiently large $n$, there is a graph $G_0$ with at
  most $n$ triangles and with no isolated vertices for which
  Inequalities~\eqref{eq:g0edges} and~\eqref{eq:g0phi} both hold. Fix
  such a graph $G_0$, and let $X$ be a smallest set of edges such that
  $G-X$ is triangle-free.  Observe that $\sizeof{X} \leq n$, since $G$
  has at most $n$ triangles, and that $G_0-X$ has no isolated
  vertices, since if $v$ is an isolated vertex in $G_0-X$, then as $v$
  is not isolated in $G_0$, there is some edge $vw \in X$, and
  $G_0 - (X-vw)$ is also triangle-free, contradicting the minimality
  of $X$.

  Let $G = G_0-X$.  As we have removed at most $n$ edges from $G_0$,
  clearly
  \[ (1-\epsilon)p\frac{n^2}{2} \leq \sizeof{E(G)} \leq (1+\epsilon)p\frac{n^2}{2}. \]
  Furthermore, for each $S \subset V(G)$, the value of $\phi_k(S)$
  has increased by at most $n$ relative to its value in $G_0$, so that
  \[
    \max_{S \subset V(G)} \phi_k(S) \leq n + \frac{k^2}{2(1-\epsilon/2)p} \leq \frac{k^2}{2(1-\epsilon)p},
    \]
    where the last inequality holds provided that $n$ is sufficiently
    large, as the gap between $\frac{k^2}{2(1-\epsilon/2)p}$ and
    $\frac{k^2}{2(1-\epsilon) p}$ is a constant factor of $k^2/p$,
    where $k^2/p \gg n$. Thus, for sufficiently large $n$, the
    graph $G$ produced in this manner has the desired properties.
\end{proof}

\begin{theorem}\label{thm:aphtaubound}
  Let $d, \epsilon > 0$ be fixed constants with $\epsilon < 1$. If $d \geq \epsilon(1-\frac{\epsilon}{2})$, then
  there is a triangular graph $H$ such that
    $\frac{\tau(H)}{\sizeof{E(H)}} \geq \frac{2(1-\epsilon)^2d - d^2}{(2d+1)(1+\epsilon)(1-\epsilon)}$
    and
    $\frac{\aph(H)}{\sizeof{E(H)}} \geq \frac{1-\epsilon}{(2d+1)(1+\epsilon)}$.    
\end{theorem}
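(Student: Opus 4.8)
The plan is to combine Lemmas~\ref{lem:tritau} and~\ref{lem:trifreephi}. I would apply Lemma~\ref{lem:trifreephi} with the constant $\epsilon/2$ in place of $\epsilon$: the hypothesis $d \geq \epsilon\bigl(1 - \tfrac{\epsilon}{2}\bigr)$ assumed here is exactly the condition $d \geq 2\cdot\tfrac{\epsilon}{2}\bigl(1 - \tfrac{\epsilon}{2}\bigr)$ required there, so for all sufficiently large $n$ this produces a triangle-free graph $G$ on $n$ vertices, with no isolated vertices, such that
\[
  (1-\tfrac{\epsilon}{2})p\tfrac{n^2}{2} \leq \sizeof{E(G)} \leq (1+\tfrac{\epsilon}{2})p\tfrac{n^2}{2}
  \qquad\text{and}\qquad
  \max_{S \subset V(G)}\phi_k(S) \leq \tfrac{k^2}{2(1-\epsilon/2)p},
\]
where $p = n^{-3/4}$ and $k = dnp$. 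Since $dnp = dn^{1/4} \to \infty$, I may assume $k$ is a positive integer (otherwise increase $d$ infinitesimally to a value $d'$ with $d'np \in \nats$ and $d' \to d$; this preserves the hypothesis on $d$, and affects the quantities below only within the slack of the final inequalities). I then set $H = \overline{K_k} \join G$, so that $\sizeof{V(H)} = n+k$ and, as $\overline{K_k}$ is edgeless, $\sizeof{E(H)} = \sizeof{E(G)} + nk$.

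First I would check that $H$ is triangular and pin down its triangles. Because $\overline{K_k}$ has no edges and $G$ is triangle-free, a triangle of $H$ must consist of an edge $uv \in E(G)$ together with a vertex $w \in V(\overline{K_k})$, and conversely every such triple is a triangle. Then every edge $uv$ of $G$ lies in a triangle $\{u,v,w\}$ (using $k \geq 1$), and every join edge $uw$ lies in a triangle $\{u,v,w\}$ where $uv$ is an edge of $G$ incident to $u$ --- such an edge exists precisely because $G$ has no isolated vertex --- so $H$ is triangular. Moreover each triangle of $H$ contains exactly one edge of $G$, so $E(G)$ is a triangle independent set of $H$ and hence $\aph(H) \geq \sizeof{E(G)}$.

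Next I would bound $\tau(H)$ using Lemma~\ref{lem:tritau}, valid since $G$ is triangle-free and $k$ is a positive integer: $\tau(H) = nk - \max_{S \subset V(G)}\phi_k(S) \geq nk - \tfrac{k^2}{2(1-\epsilon/2)p} = n^2 p\bigl(d - \tfrac{d^2}{2(1-\epsilon/2)}\bigr)$, using $k = dnp$. The edge estimates above give
\[
  \sizeof{E(H)} = \sizeof{E(G)} + nk \leq (1+\tfrac{\epsilon}{2})p\tfrac{n^2}{2} + dn^2 p \leq (2d+1)(1+\epsilon)\tfrac{pn^2}{2},
\]
where the last step uses $1 + \tfrac{\epsilon}{2} + 2d \leq (2d+1)(1+\epsilon)$. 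Dividing $\aph(H) \geq (1-\tfrac{\epsilon}{2})p\tfrac{n^2}{2}$ by this bound gives $\tfrac{\aph(H)}{\sizeof{E(H)}} \geq \tfrac{1-\epsilon/2}{(2d+1)(1+\epsilon)} \geq \tfrac{1-\epsilon}{(2d+1)(1+\epsilon)}$, as required. Dividing the bound on $\tau(H)$ by the same quantity reduces the remaining inequality to the elementary estimate
\[
  2d - \frac{d^2}{1-\epsilon/2} \;\geq\; 2(1-\epsilon)d - \frac{d^2}{1-\epsilon},
\]
which holds for all $d, \epsilon > 0$ because the difference of its two sides equals $2d\epsilon + d^2\bigl(\tfrac{1}{1-\epsilon} - \tfrac{1}{1-\epsilon/2}\bigr) > 0$; this yields $\tfrac{\tau(H)}{\sizeof{E(H)}} \geq \tfrac{2(1-\epsilon)^2 d - d^2}{(2d+1)(1+\epsilon)(1-\epsilon)}$.

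I do not expect a genuine obstacle here: all the substance lives in Lemmas~\ref{lem:tritau} and~\ref{lem:trifreephi}, and what remains is bookkeeping. The points that need a little care are: (i) running Lemma~\ref{lem:trifreephi} at scale $\epsilon/2$, so that its hypothesis matches the one assumed here; (ii) the harmless rounding of $dnp$ to an integer, which goes through since $dnp \to \infty$ and both displayed estimates above are in fact strict, leaving a constant-factor cushion; and (iii) the observation that dividing by $\sizeof{E(H)}$ implicitly needs $nk - \tfrac{k^2}{2(1-\epsilon/2)p} \geq 0$, i.e.\ $d \leq 2(1-\tfrac{\epsilon}{2})$ --- but once $d$ exceeds this one has $2(1-\epsilon)^2 \leq 2-\epsilon < d$, which already forces the claimed lower bound on $\tfrac{\tau(H)}{\sizeof{E(H)}}$ to be nonpositive, so it holds trivially there, while the $\aph$-bound holds for all $d$.
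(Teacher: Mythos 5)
Your proposal is correct and follows essentially the same route as the paper: build $H = \overline{K_k} \join G$ from the graph $G$ of Lemma~\ref{lem:trifreephi}, use $E(G)$ as the triangle independent set, and compute $\tau(H)$ via Lemma~\ref{lem:tritau}. The only differences are bookkeeping — you invoke Lemma~\ref{lem:trifreephi} at scale $\epsilon/2$ (which in fact matches its hypothesis to the theorem's more cleanly than the paper's own invocation does) and handle integrality of $k$ by perturbation where the paper uses $\floor{k}$ — and both are sound, with your explicit treatment of the sign issue in the $\tau$ division being a welcome extra.
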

\begin{proof}
  Let $G$ be a graph satisfying the conclusion of
  Lemma~\ref{lem:trifreephi} for the given values of $d$ and
  $\epsilon$, let $n = \sizeof{V(G)}$, let $p=n^{-3/4}$, let $k = npd$, and let
  $H = \overline{K_{\floor{k}}} \join G$.

  Observe that
  \begin{align*}
    \sizeof{E(H)} &= n\floor{k} + \sizeof{E(G)}\\
                  &\leq nk + \sizeof{E(G)} \\
                  &\leq nk + (1+\epsilon)p\frac{n^2}{2} \\
                  &\leq (1+\epsilon)n^2p\frac{2d+1}{2}.
  \end{align*}
  Since $G$ is triangle-free and $\phi_{\floor{k}}(S) \leq \phi_k(S)$ for all $S$, and since $n(k-1) \geq (1-\epsilon)nk$ for all sufficiently large $n$, applying Lemma~\ref{lem:tritau} yields
  \begin{align*}
    \tau(H) &= n\floor{k} - \max_{S \subset V(G)}\phi_{\floor{k}}(S) \\
    &\geq n(k-1) - \max_{S \subset V(G)}\phi_{k}(S) \\
    &\geq (1-\epsilon)nk - \frac{k^2}{2(1-\epsilon)p} \\
    &= \frac{n^2pd[2(1-\epsilon)^2 - d]}{2(1-\epsilon)}.
  \end{align*}  
  Combining this with the upper bound on $\sizeof{E(H)}$ and simplifying, we have
  \[ \frac{\tau(H)}{\sizeof{E(H)}} \geq \frac{d[2(1-\epsilon)^2-d]}{(1+\epsilon)(1-\epsilon)(2d+1)} = 
  \frac{2(1-\epsilon)^2d - d^2}{(2d+1)(1+\epsilon)(1-\epsilon)}.\]

  This establishes the desired lower bound on $\tau(H)$. For the bound on $\aph(H)$, observe
  that $G$ is a triangle-independent subgraph of $H$, so that
  \[ \aph(H) \geq \sizeof{E(G)} \geq (1-\epsilon)p\frac{n^2}{2}. \]
  Therefore, using the upper bound on $\sizeof{E(H)}$, we have
  \[ \frac{\aph(H)}{\sizeof{E(H)}} \geq \frac{1-\epsilon}{(2d+1)(1+\epsilon)}. \]
  Finally, since $G$ has no isolated vertices, it is easy to see that
  $H$ is triangular. 
\end{proof}
For any fixed $d > 0$, the hypothesis of Theorem~\ref{thm:aphtaubound} holds
for all sufficiently small positive $\epsilon$. Taking limits as $\epsilon \to 0$ gives the following corollary.
\begin{corollary}\label{cor:gamma}
  For every $d > 0$, and every $\gamma > 0$, there is a triangular graph $H$ with
  $\frac{\tau(H)}{\sizeof{E(H)}} \geq \frac{2d-d^2}{2d+1} - \gamma$
  and $\frac{\aph(H)}{\sizeof{E(H)}} \geq \frac{1}{2d+1} - \gamma$.
\end{corollary}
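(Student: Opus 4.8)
The plan is to derive the corollary directly from Theorem~\ref{thm:aphtaubound} by a limiting argument in $\epsilon$. First I would note that for any fixed $d > 0$, the quantity $\epsilon(1 - \tfrac{\epsilon}{2})$ tends to $0$ as $\epsilon \to 0^+$, so there is some $\epsilon_0 \in (0,1)$ such that the hypothesis $d \geq \epsilon(1 - \tfrac{\epsilon}{2})$ of Theorem~\ref{thm:aphtaubound} holds for every $\epsilon \in (0, \epsilon_0)$.

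Next I would examine the two lower bounds furnished by the theorem as functions of $\epsilon$. Both $\frac{2(1-\epsilon)^2 d - d^2}{(2d+1)(1+\epsilon)(1-\epsilon)}$ and $\frac{1-\epsilon}{(2d+1)(1+\epsilon)}$ are rational in $\epsilon$ with denominators that do not vanish on $[0, \epsilon_0)$, hence continuous there; evaluating at $\epsilon = 0$ gives $\frac{2d - d^2}{2d+1}$ and $\frac{1}{2d+1}$, respectively. So, given $\gamma > 0$, I can choose $\epsilon \in (0, \epsilon_0)$ small enough that
\[
\frac{2(1-\epsilon)^2 d - d^2}{(2d+1)(1+\epsilon)(1-\epsilon)} \geq \frac{2d-d^2}{2d+1} - \gamma
\qquad\text{and}\qquad
\frac{1-\epsilon}{(2d+1)(1+\epsilon)} \geq \frac{1}{2d+1} - \gamma.
\]

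For this choice of $d$ and $\epsilon$, Theorem~\ref{thm:aphtaubound} yields a triangular graph $H$ with $\tau(H)/\sizeof{E(H)}$ and $\aph(H)/\sizeof{E(H)}$ at least the respective left-hand sides above, and hence at least $\frac{2d-d^2}{2d+1} - \gamma$ and $\frac{1}{2d+1} - \gamma$. There is essentially no obstacle: the only point requiring a moment's care is that the hypothesis of Theorem~\ref{thm:aphtaubound} and the two closeness requirements can be met simultaneously, which is immediate since all three hold for every sufficiently small $\epsilon$.
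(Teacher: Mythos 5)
Your proposal is correct and is exactly the argument the paper intends: the paper dispatches this corollary with the one-line remark that the hypothesis of Theorem~\ref{thm:aphtaubound} holds for all sufficiently small $\epsilon>0$ and that one then takes limits as $\epsilon \to 0$. Your write-up simply makes the continuity argument explicit, which is fine.
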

Note that when $d \geq 2$, the lower bound on $\tau(H)/\sizeof{E(H)}$ in Corollary~\ref{cor:gamma} is nonpositive.
Thus, Corollary~\ref{cor:gamma} is only useful for $d \in (0,2)$.
Choosing $d$ to maximize $\frac{2d-d^2}{2d+1}$ yields the following partial
answer to Problem~\ref{prob:minaphtau}.
\begin{corollary}
  For all sufficiently small $\gamma > 0$, there is a triangular graph $H$ with $\frac{\tau(H)}{\sizeof{E(H)}} \geq \frac{3 - \sqrt{5}}{2} - \gamma > 0.38$ and $\frac{\aph(H)}{\sizeof{E(H)}} \geq \frac{1}{\sqrt{5}} - \gamma > 0.44$.
\end{corollary}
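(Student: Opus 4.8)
The plan is to optimize the estimate of Corollary~\ref{cor:gamma} over its free parameter $d$. Since that corollary is only useful for $d \in (0,2)$, I would first maximize $g(d) := \frac{2d-d^2}{2d+1}$ on this interval. Differentiating, the numerator of $g'(d)$ is a positive multiple of $-(d^2+d-1)$, so the unique critical point in $(0,2)$ is $d^* = \frac{\sqrt5-1}{2}$; since $g(0) = g(2) = 0$ while $g(d^*) > 0$, this critical point is the maximizer.

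Next I would evaluate both bounds from Corollary~\ref{cor:gamma} at $d = d^*$. The convenient fact is that $d^*$ satisfies $(d^*)^2 = 1 - d^*$, whence $2d^* - (d^*)^2 = 3d^* - 1$ and $2d^* + 1 = \sqrt5$. Substituting and simplifying gives
\[ g(d^*) = \frac{3d^*-1}{\sqrt5} = \frac{3-\sqrt5}{2}, \qquad \frac{1}{2d^*+1} = \frac{1}{\sqrt5}. \]

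Finally, I would apply Corollary~\ref{cor:gamma} with $d = d^*$ and with a parameter $\gamma > 0$ to be chosen small, obtaining a triangular graph $H$ with $\frac{\tau(H)}{\sizeof{E(H)}} \geq \frac{3-\sqrt5}{2} - \gamma$ and $\frac{\aph(H)}{\sizeof{E(H)}} \geq \frac{1}{\sqrt5} - \gamma$. Since $\frac{3-\sqrt5}{2} > 0.38$ and $\frac{1}{\sqrt5} > 0.44$ with strict inequality, choosing $\gamma$ small enough that $\frac{3-\sqrt5}{2} - \gamma > 0.38$ and $\frac{1}{\sqrt5} - \gamma > 0.44$ both hold finishes the argument.

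There is no substantive obstacle here: the proof is a routine one‑variable optimization followed by arithmetic simplification. The only mild subtlety is spotting the golden‑ratio identity $(d^*)^2 = 1 - d^*$, which makes the final simplification clean; absent that observation one would simply substitute $d^* = \frac{\sqrt5-1}{2}$ into $\frac{2d-d^2}{2d+1}$ and $\frac{1}{2d+1}$ and grind through the algebra directly.
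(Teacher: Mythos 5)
Your proposal is correct and matches the paper's proof, which simply takes $d = \frac{-1+\sqrt{5}}{2}$ in Corollary~\ref{cor:gamma}; you additionally spell out the optimization and the algebraic simplification that the paper leaves implicit. The arithmetic checks out: $2d^*+1=\sqrt{5}$ and $(d^*)^2 = 1-d^*$ give exactly the stated constants.
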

\begin{proof}
  Take $d = \frac{-1 + \sqrt{5}}{2}$ in Corollary~\ref{cor:gamma}. 
\end{proof}
Similarly, choosing $d$ to maximize $\frac{1 + 2(2d-d^2)}{2d+1}$ yields
the following partial answer to Problem~\ref{prob:sumaphtau}.
\begin{corollary}
  For all sufficiently small $\gamma > 0$, there is a triangular graph $H$ with $\frac{\aph(H) + 2\tau(H)}{\sizeof{E(H)}} \geq 3 - \sqrt{3} - \gamma > 1.26$.
\end{corollary}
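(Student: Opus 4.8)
The plan is to feed Corollary~\ref{cor:gamma} into the linear combination $\aph + 2\tau$ with the parameter $d$ still free, and then choose $d$ optimally. For any $d > 0$ and any $\gamma > 0$, Corollary~\ref{cor:gamma} produces a triangular graph $H$ with $\aph(H)/\sizeof{E(H)} \geq \frac{1}{2d+1} - \gamma$ and $\tau(H)/\sizeof{E(H)} \geq \frac{2d-d^2}{2d+1} - \gamma$; adding the first bound to twice the second gives
\[ \frac{\aph(H) + 2\tau(H)}{\sizeof{E(H)}} \geq \frac{1 + 2(2d-d^2)}{2d+1} - 3\gamma = \frac{1 + 4d - 2d^2}{2d+1} - 3\gamma. \]
Because $\gamma$ is allowed to be any sufficiently small positive number, the coefficient $3$ in front of it is immaterial (one can replace $\gamma$ by $\gamma/3$ at the end), so the whole problem reduces to maximizing $h(d) := \frac{1 + 4d - 2d^2}{2d+1}$ over $d \in (0,\infty)$.

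Next I would carry out this one-variable optimization. Differentiating $h$ and clearing the positive factor $(2d+1)^2$, one finds that $h'(d)$ has the same sign as $-2d^2 - 2d + 1$, which has a unique positive root at $d = \frac{\sqrt{3}-1}{2}$; since $h$ is increasing to the left of this point and decreasing to its right, this is the global maximizer on $(0,\infty)$. At $d = \frac{\sqrt 3 - 1}{2}$ we have $2d+1 = \sqrt 3$, and the defining relation $2d^2 + 2d = 1$ lets us rewrite $1 + 4d - 2d^2 = 6d = 3(\sqrt 3 - 1)$, so $h(d) = \frac{3(\sqrt 3 - 1)}{\sqrt 3} = 3 - \sqrt 3$. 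Substituting $d = \frac{\sqrt 3 - 1}{2}$ into the displayed bound then gives $\frac{\aph(H) + 2\tau(H)}{\sizeof{E(H)}} \geq 3 - \sqrt 3 - 3\gamma$, and absorbing the error term yields the statement, with $3 - \sqrt 3 \approx 1.268 > 1.26$.

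I do not expect any genuine difficulty here: all of the construction work is already contained in Lemma~\ref{lem:trifreephi}, Theorem~\ref{thm:aphtaubound}, and Corollary~\ref{cor:gamma}, and what remains is a routine calculus exercise. The only places meriting a moment's care are the bookkeeping of the constant multiplying $\gamma$ when the two estimates from Corollary~\ref{cor:gamma} are combined, and confirming that the interior critical point of $h$ is a maximum rather than a minimum (which is immediate from, e.g., $h(0) = 1 < 3 - \sqrt 3$ together with $h(d) \to -\infty$ as $d \to \infty$).
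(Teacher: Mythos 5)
Your proposal is correct and is exactly the paper's argument: the paper's proof consists of the single line ``Take $d = \frac{-1+\sqrt{3}}{2}$ in Corollary~\ref{cor:gamma},'' and you have simply written out the routine optimization and the absorption of the constant multiplying $\gamma$ that the paper leaves implicit. All of your computations (the sign of $h'$, the critical point, and the value $h\bigl(\tfrac{\sqrt{3}-1}{2}\bigr) = 3-\sqrt{3}$) check out.
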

\begin{proof}
  Take $d = \frac{-1 + \sqrt{3}}{2}$ in Corollary~\ref{cor:gamma}. 
\end{proof}
\section{Acknowledgments}
We thank the anonymous referees for their careful reading of the paper and
their helpful comments.
\bibliographystyle{amsplain}\bibliography{sumbib}

\providecommand{\bysame}{\leavevmode\hbox to3em{\hrulefill}\thinspace}
\providecommand{\MR}{\relax\ifhmode\unskip\space\fi MR }
\providecommand{\MRhref}[2]{%
  \href{http://www.ams.org/mathscinet-getitem?mr=#1}{#2}
}
\providecommand{\href}[2]{#2}
\begin{thebibliography}{10}

\bibitem{DomSurvey}
Mustapha Chellali, Odile Favaron, Adriana Hansberg, and Lutz Volkmann,
  \emph{{$k$}-domination and {$k$}-independence in graphs: a survey}, Graphs
  Combin. \textbf{28} (2012), no.~1, 1--55. \MR{2863534 (2012k:05005)}

\bibitem{OldNew}
Paul Erd{\H{o}}s, \emph{Some of my old and new combinatorial problems}, Paths,
  flows, and {VLSI}-layout ({B}onn, 1988), Algorithms Combin., vol.~9,
  Springer, Berlin, 1990, pp.~35--45. \MR{1083376 (91j:05001)}

\bibitem{EGT}
Paul Erd{\H{o}}s, Tibor Gallai, and Zsolt Tuza, \emph{Covering and independence
  in triangle structures}, Discrete Math. \textbf{150} (1996), no.~1-3,
  89--101, Selected papers in honour of Paul Erd{\H{o}}s on the occasion of his
  80th birthday (Keszthely, 1993). \MR{1392722 (97d:05222)}

\bibitem{favaron}
Odile Favaron, \emph{On a conjecture of {F}ink and {J}acobson concerning
  {$k$}-domination and {$k$}-dependence}, J. Combin. Theory Ser. B \textbf{39}
  (1985), no.~1, 101--102. \MR{805459 (86k:05064)}

\bibitem{fink-jacobson1}
John~Frederick Fink and Michael~S. Jacobson, \emph{{$n$}-domination in graphs},
  Graph theory with applications to algorithms and computer science
  ({K}alamazoo, {M}ich., 1984), Wiley-Intersci. Publ., Wiley, New York, 1985,
  pp.~283--300. \MR{812671 (87e:05086)}

\bibitem{fink-jacobson2}
\bysame, \emph{On {$n$}-domination, {$n$}-dependence and forbidden subgraphs},
  Graph theory with applications to algorithms and computer science
  ({K}alamazoo, {M}ich., 1984), Wiley-Intersci. Publ., Wiley, New York, 1985,
  pp.~301--311. \MR{812672 (87e:05087)}

\bibitem{RandomGraphs}
Svante Janson, Tomasz {\L}uczak, and Andrzej Rucinski, \emph{Random graphs},
  Wiley-Interscience Series in Discrete Mathematics and Optimization,
  Wiley-Interscience, New York, 2000. \MR{1782847}

\bibitem{kim-ramsey}
Jeong~Han Kim, \emph{The {R}amsey number {$R(3,t)$} has order of magnitude
  {$t^2/\log t$}}, Random Structures Algorithms \textbf{7} (1995), no.~3,
  173--207. \MR{1369063}

\bibitem{norin-sun}
Sergey Norin and Yue~Ru Sun, \emph{Triangle-independent sets vs. cuts}, 2016,
  arXiv:1602.04370.

\bibitem{greg-vizing}
Gregory~J. Puleo, \emph{Maximal {$k$}-edge-colorable subgraphs, {V}izing's
  theorem, and {T}uza's conjecture}, Discrete Math. \textbf{340} (2017), no.~7,
  1573--1580. \MR{3634127}

\end{thebibliography}
\end{document}